\theoremstyle{plain} 
\newtheorem{theorem}{Theorem}
\newtheorem{lemma}[theorem]{Lemma}
\newtheorem{corollary}[theorem]{Corollary}
\theoremstyle{definition}
\theoremstyle{remark}
\newtheorem{remark}{Remark}
\newcommand{\R}{\ensuremath{\mathbb{R}}}
\newcommand{\st}{\ensuremath{\::\:}}
\newcommand{\re}{\mbox{\textup{r.e.}}}
\begin{document}

\title{A note on Cauchy integrability}
\author{Scott Schneider}
\address{Mathematics Department\\
University of Michigan\\
Ann Arbor, MI 48109--1043, U.S.A.}
\email{sschnei@umich.edu}
\keywords{Cauchy integral, Riemann integral, endpoint Riemann sums}
\subjclass[2010]{26A42, 97I50}

\begin{abstract} We show that for any bounded function $f:[a,b]\rightarrow\R$ and $\epsilon>0$ there is a partition $P$ of $[a,b]$ with respect to which the Riemann sum of $f$ using right endpoints is within $\epsilon$ of the upper Darboux sum of $f$. This leads to an elementary proof of the theorem of Gillespie \cite{G} showing that Cauchy's and Riemann's definitions of integrability coincide.
\end{abstract}

\maketitle

\section{Introduction}

A natural reaction to Riemann's definition of integrability is to wonder how far it can be simplified, and many of the resulting questions can reasonably be asked of undergraduates in a first course on real analysis.  For instance, it is relatively easy to show that altering his definition by allowing only regular partitions or by allowing only endpoints as sample points does not enlarge the class of integrable functions, but that making both restrictions at once does. On the other hand it appears to be less widely known, and to be somewhat trickier to show using elementary methods, that for bounded functions allowing only \emph{left} endpoints as sample points does not enlarge the class of integrable functions.  This was first shown in 1915 by Gillespie \cite{G} using oscillation and measure, and was later reproved by Kristensen, Poulsen, and Reich \cite{KPR} with a more elementary argument that could be presented to undergraduates or assigned to them as independent study. In this note we give another elementary proof of Gillespie's result using a lemma that may be of independent interest. Our proof avoids measure theory and requires only basic facts about the Darboux integral.

What we call \emph{left-endpoint Riemann integrability} below is in fact the definition of integrability that Cauchy \cite{C} originally gave for (continuous) functions in 1823.  The papers \cite{G} and \cite{KPR} use left endpoints, following Cauchy, but here we use right endpoints instead; since $f(x)$ could always be replaced by $f(-x)$, the difference is inconsequential.

\section{Notation}

Let $f$ be a function on the interval $[a,b]$. We write 
\[
R(f,P,\{x_k^*\}) \ := \ \sum_{k=1}^n f(x_k^*)(x_k-x_{k-1})
\]
for the Riemann sum of $f$ on $[a,b]$ using the partition $P=\{x_k\}_{k=0}^n$ of $[a,b]$ and sample points $x_k^*\in [x_{k-1},x_k]$. We write $R(f,P,\re)$ for the Riemann sum that uses right endpoints $x_k^*=x_k$ as sample points in each subinterval. The function $f$ is \emph{Riemann integrable} on $[a,b]$ if there is $L\in\mathbb R$ such that for every $\epsilon>0$ there is $\delta>0$ such that for all partitions $P=\{x_k\}_{k=0}^n$ of $[a,b]$ with mesh $\|P\|:=\max\{x_k-x_{k-1}\st 1\leq k\leq n\}<\delta$ and for all choices $\{x_k^*\}$ of sample points relative to $P$, $|R(f,P,\{x_k^*\})-L|<\epsilon$. We say that $f$ is \emph{right-endpoint Riemann integrable} if the same holds when restricting to right-endpoint Riemann sums, and define \emph{left-endpoint Riemann  integrability} analogously.

If $f$ is bounded, we write 
\[
\begin{array}{rcl}
U(f,P) & := & \displaystyle{\sum_{k=1}^n(\sup(f, [x_{k-1},x_k]))\cdot (x_k-x_{k-1})}, \\
L(f,P) & := & \displaystyle{\sum_{k=1}^n(\inf(f, [x_{k-1},x_k]))\cdot (x_k-x_{k-1})}
\end{array}
\]
for the upper and lower Darboux sums of $f$ on $[a,b]$ relative to the partition $P$, and 
\[
\begin{array}{rcl}
U(f) & := & \inf\{U(f,P)\st \mbox{$P$ is a partition of $[a,b]$}\}, \\
L(f) & := & \sup\{L(f,P)\st \mbox{$P$ is a partition of $[a,b]$}\}
\end{array}
\]
for the upper and lower Darboux integrals of $f$ on $[a,b]$. The function $f$ is Darboux integrable if $U(f)=L(f)$ by definition, and Riemann integrable if and only if Darboux integrable by the well-known correspondence.



\section{The proof}

\begin{lemma} If $f:[a,b]\to\mathbb R$ is bounded, then for every $\epsilon>0$ there is a partition $Q$ of $[a,b]$ such that $U(f,Q)-R(f,Q,\re)<\epsilon$.\label{lem} \end{lemma}

\begin{proof} Fix $B$ such that $\sup(f,[a,b])-\inf(f,[a,b])\leq B$, and let $\epsilon>0$. Define $g:[a,b]\to\mathbb R$ by $g(x)=\sup(f,[x,b])$, so that $g$ is decreasing, and hence Riemann integrable, on $[a,b]$, say with value $L=\int_a^bg$. Fix $\delta_1>0$ such that for any partition $P$ of $[a,b]$ with $\|P\|<\delta_1$ and for any choice $x_k^\ast$ of sample points relative to $P$, $|R(g,P,\{x_k^\ast\})-L|<\epsilon$. Let $\delta_2=\frac{\epsilon}{B}$, and set $\delta=\min\{\delta_1,\delta_2\}$. Let $P=\{x_k\}_{k=0}^n$ be a partition of $[a,b]$ with $\|P\|<\frac{\delta}{2}$. We will use $P$ in order to obtain the desired partition $Q$. Let 
\[
\begin{array}{lll}
C & = & \{1\leq k\leq n\st g(x_{k-1})=g(x_k)\}; \\
D & = & \{1\leq k\leq n\st g(x_{k-1})>g(x_k)\}; \\ 
D' & = & \{k\in D\setminus\{n\}\st k+1\not\in D\}.
\end{array}
\]
For each $k\in D'$ let $z_k=\inf\{x\in [x_{k-1},x_k]\st g(x)=g(x_k)\}$. Let
\[
\begin{array}{lll}
D_0' & = & \{k\in D'\st g(z_k)=g(x_k)\}; \\
D_1' & = & \{k\in D'\st g(z_k)>g(x_k)\}.
\end{array}
\]
For each $k\in D$, choose $y_k\in [x_{k-1},x_k)$ such that $|f(y_k)-g(x_{k-1})|<\frac{\epsilon}{b-a}$, where additionally if $k\in D_0'$ then $y_k<z_k$, and if $k\in D_1'$ then $y_k\leq z_k$. Also for each $k\in D'$ choose an additional point in $[x_{k-1},x_k]$ as follows. If $k\in D_0'$, choose $u_k\in (y_k,z_k)$ such that $|u_k-z_k|<\frac{\epsilon}{B|D'|}$ and $|f(u_k)-g(u_k)|<\frac{\epsilon}{b-a}$; if $k\in D_1'$, choose $v_k\in (z_k,x_k)$ such that $|v_k-z_k|<\frac{\epsilon}{B|D'|}$. Now let
\[
\begin{array}{lll}
Q_0 & = & \{a,b\}; \\
Q_1 & = & \{y_k\st k\in D\,\wedge\,k-1\not\in D\}; \\
Q_2 & = & \{z_k\st k\in D_0'\}\,\cup\,\{v_k\st k\in D_1'\}; \\
Q_3 & = & \{z_k\st k\in D_1'\,\wedge\,z_k\ne y_k\}\,\cup\,\{u_k\st k\in D_0'\}\,\cup\,\{y_k\st k,k-1\in D\}; \\
Q & = & Q_0\cup Q_1\cup Q_2\cup Q_3.
\end{array}
\]
We show that $U(f,Q)-R(f,Q,\re)<6\epsilon$.

For each $q\in Q$, let $q'=q$ if $q=a$ and otherwise let $q'$ be the largest element of $Q$ strictly less than $q$. For $q\in Q$ write 
\[
E_q \ := \ \big(\sup(f,[q',q])-f(q)\big)(q-q'),
\]
so that
\[
U(f,Q)-R(f,Q,\re) \ = \ \sum_{q\in Q}E_q \ \leq \ \sum_{i=0}^3\left(\sum_{q\in Q_i}E_q\right).
\]
We now bound the sums $\displaystyle{\sum_{q\in Q_i}E_q}$ for each $i\leq 3$.

First note that if $g(x_{n-1})=g(x_n)$ then $\sup(f,[b',b])=f(b)$ and hence $E_b=0$. On the other hand if $g(x_{n-1})>g(x_n)$ then $n\in D$, so $b'\in [x_{n-1},b)$ and hence $E_b<\frac{B\delta}{2}$. Since $E_a=0$, we get
\[
\sum_{q\in Q_0}E_q \ < \ \frac{B\delta}{2} \ < \ \epsilon.
\]
Next notice that if $q\in Q_1$ then $q'=a$ or $q'\in Q_2$. In either case $\sup(f,[q',q])-f(q)<\frac{\epsilon}{b-a}$, and therefore
\[
\sum_{q\in Q_1}E_q \ < \ \epsilon.
\]
If $q\in Q_2$ then $q-q'<\frac{\epsilon}{B|D'|}$, and since $|Q_2|\leq |D'|$ this implies
\[
\sum_{q\in Q_2}E_q \ < \ B|D'|\cdot\frac{\epsilon}{B|D'|} \ = \ \epsilon.
\]
Finally, suppose $q\in Q_3$. If $q=z_k\ne y_k$ where $k\in D_1'$, then $g(q)=f(q)$ and $q'=y_k$ so $q-q'<\frac{\delta}{2}$. If $q=u_k$ where $k\in D_0'$ then $|f(q)-g(q)|<\frac{\epsilon}{b-a}$ and $q'=y_k$ so again $q-q'<\frac{\delta}{2}$. Finally if $q=y_k$ where $k,k-1\in D$, then
\[
g(x_{k-1})-\frac{\epsilon}{b-a} \ \leq \ f(q) \ \leq \ g(q) \ \leq \ g(x_{k-1})
\]
and $q'=y_{k-1}$ so $q-q'<\delta$. Thus in all cases $q-q'<\delta\leq\delta_1$ and $|f(q)-g(q)|<\frac{\epsilon}{b-a}$. Let now $Q'$ be any partition of $[a,b]$ that extends $Q_3\cup\{q'\st q\in Q_3\}$, satisfies $\|Q'\|<\delta_1$, and has no points in the intervals $(q',q)$ for $q\in Q_3$. Then
\[
\begin{array}{lllll}
\displaystyle{\sum_{q\in Q_3}E_q} & = & \displaystyle{\sum_{q\in Q_3}\big(\sup(f,[q',q])-f(q)\big)(q-q')} & & \medskip \\
& \leq & \displaystyle{\sum_{q\in Q_3}\big(g(q')-f(q)\big)(q-q')} & & \medskip \\
& < & \displaystyle{\epsilon+\sum_{q\in Q_3}\big(g(q')-g(q)\big)(q-q')} & & \medskip \\
& \leq & \displaystyle{\epsilon+\sum_{q\in Q'}\big(g(q')-g(q)\big)(q-q')} & & \medskip \\
& = & \displaystyle{\epsilon+\sum_{q\in Q'} g(q')(q-q') - \sum_{q\in Q'}g(q)(q-q')} & < & 3\epsilon.
\end{array}
\]
Combining these estimates we have
\[
U(f,Q)-R(f,Q,\re) \ = \ \sum_{q\in Q}E_q \ < \ 6\epsilon,
\]
and since $\epsilon$ was arbitrary this completes the proof. \end{proof}

\begin{corollary} If $f:[a,b]\to\mathbb R$ is bounded, then for every $\epsilon>0$ there is a partition $Q$ of $[a,b]$ such that $R(f,Q,\re)-L(f,Q)<\epsilon$.\label{cor} \end{corollary}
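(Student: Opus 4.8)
The plan is to derive this directly from Lemma~\ref{lem} by applying that lemma to the negated function $-f$, exploiting the elementary symmetry between the upper and lower constructions under negation. First I would record the two identities that do all the work. For any subinterval $I\subseteq[a,b]$ we have $\sup(-f,I)=-\inf(f,I)$, so summing over the subintervals of any partition $P$ gives $U(-f,P)=-L(f,P)$. Moreover, since the Riemann sum is linear in its integrand and the right endpoints $x_k$ of $P$ do not depend on the function being sampled, $R(-f,P,\re)=-R(f,P,\re)$ for every partition $P$.

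With these in hand the argument is immediate. Since $f$ is bounded, so is $-f$, and hence Lemma~\ref{lem} applies to $-f$ and yields, for the given $\epsilon>0$, a partition $Q$ of $[a,b]$ satisfying $U(-f,Q)-R(-f,Q,\re)<\epsilon$. Substituting the two identities above rewrites the left-hand side as $-L(f,Q)-\big(-R(f,Q,\re)\big)=R(f,Q,\re)-L(f,Q)$, so this same $Q$ witnesses $R(f,Q,\re)-L(f,Q)<\epsilon$, which is exactly the desired conclusion.

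I do not expect any genuine obstacle here: the entire content is the substitution $f\mapsto-f$, under which the Lemma's inequality transforms verbatim into the Corollary's. The only point requiring care is the sign bookkeeping in the two identities — in particular that negation interchanges $\sup$ with $\inf$ and therefore interchanges $U$ with $L$, while leaving the sample points of the right-endpoint sum untouched so that $R(-f,Q,\re)$ is simply $-R(f,Q,\re)$.
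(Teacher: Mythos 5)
Your proof is correct and is exactly the paper's approach: the paper's entire proof of the corollary is ``Apply Lemma \ref{lem} to $-f$,'' and your two identities $U(-f,P)=-L(f,P)$ and $R(-f,P,\re)=-R(f,P,\re)$ supply precisely the sign bookkeeping the paper leaves implicit.
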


\begin{proof} Apply Lemma \ref{lem} to $-f$. \end{proof}

\begin{theorem}[\cite{G},\cite{D},\cite{KPR}] Let $f:[a,b]\to\mathbb R$ be a bounded function. If $f$ is right-endpoint Riemann integrable on $[a,b]$, then $f$ is Riemann integrable on $[a,b]$.\label{prop} \end{theorem}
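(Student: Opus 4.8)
The plan is to show that $f$ is Darboux integrable, i.e.\ that $U(f)=L(f)$; since $L(f)\le U(f)$ always holds and Darboux integrability coincides with Riemann integrability, this suffices. Let $L$ denote the value of the right-endpoint integral, fix $\epsilon>0$, and choose $\delta>0$ so that every partition of $[a,b]$ with mesh $<\delta$ has right-endpoint Riemann sum within $\epsilon$ of $L$. The idea is to manufacture a partition $Q$ of small mesh whose right-endpoint sum is simultaneously close to $U(f,Q)$ (via Lemma \ref{lem}) and close to $L$ (via the hypothesis), together with a symmetric partition $Q'$ linking $L(f,Q')$ to $L$ through Corollary \ref{cor}.

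The main obstacle is that Lemma \ref{lem} exerts no control over the mesh of the partition it produces, so right-endpoint integrability cannot be invoked for that partition directly. I would circumvent this by localizing the lemma. Choose points $a=c_0<\cdots<c_M=b$ with $c_i-c_{i-1}<\delta$ for every $i$, apply Lemma \ref{lem} to the restriction of $f$ to each $[c_{i-1},c_i]$ with tolerance $\epsilon/M$ to obtain a partition $Q^{(i)}$ of $[c_{i-1},c_i]$ satisfying $U(f,Q^{(i)})-R(f,Q^{(i)},\re)<\epsilon/M$, and set $Q=\bigcup_{i=1}^M Q^{(i)}$. Since each $Q^{(i)}$ partitions an interval of length $<\delta$, the glued partition automatically has $\|Q\|<\delta$; and because every $c_i$ lies in $Q$, both the upper Darboux sums and the right-endpoint sums are additive across the pieces, so that $U(f,Q)-R(f,Q,\re)=\sum_{i=1}^M\big(U(f,Q^{(i)})-R(f,Q^{(i)},\re)\big)<\epsilon$. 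This produces a small-mesh partition with the property of the lemma, which is exactly what is needed to apply the hypothesis.

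With such a $Q$ the remaining estimate is routine: $U(f)\le U(f,Q)<R(f,Q,\re)+\epsilon<L+2\epsilon$, where the final inequality uses $\|Q\|<\delta$. Running the identical localization with Corollary \ref{cor} in place of Lemma \ref{lem} yields a partition $Q'$ with $\|Q'\|<\delta$ and $R(f,Q',\re)-L(f,Q')<\epsilon$, whence $L(f)\ge L(f,Q')>R(f,Q',\re)-\epsilon>L-2\epsilon$. Combining the two bounds gives $U(f)-L(f)<4\epsilon$, and since $\epsilon>0$ was arbitrary while $L(f)\le U(f)$ always, we conclude $U(f)=L(f)$. Hence $f$ is Darboux integrable and therefore Riemann integrable.
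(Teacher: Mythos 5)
Your proposal is correct and follows essentially the same route as the paper: both proofs localize Lemma \ref{lem} and Corollary \ref{cor} to subintervals of length less than $\delta$, glue the resulting partitions to obtain small-mesh partitions $Q$ and $Q'$ with $U(f,Q)-R(f,Q,\re)<\epsilon$ and $R(f,Q',\re)-L(f,Q')<\epsilon$, and then combine with the right-endpoint hypothesis to get $U(f)-L(f)<4\epsilon$. The explicit remark that the lemma gives no mesh control, and that this is why the localization is needed, is a nice touch but does not change the argument.
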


\begin{proof} Let $L$ be the limiting value of the right-endpoint Riemann sums of $f$ on $[a,b]$. Let $\epsilon>0$ be arbitrary, and fix $\delta>0$ such that for all partitions $P$ of $[a,b]$ with $\|P\|<\delta$, $|R(f,P,\re)-L|<\epsilon$. Let $P=\{x_k\}_{k=0}^n$ be a partition of $[a,b]$ with $\|P\|<\delta$. Using Lemma \ref{lem} and Corollary \ref{cor}, for each $1\leq k\leq n$ choose partitions $Q_k^U$ and $Q_k^L$ of $[x_{k-1},x_k]$ such that on $[x_{k-1},x_k]$,
\[
U(f,Q_k^U)-R(f,Q_k^U,\re) \ < \ \frac{\epsilon}{n}\quad\mbox{and}\quad R(f,Q_k^L,\re)-L(f,Q_k^L) \ < \ \frac{\epsilon}{n}.
\]
Let $Q^U=\cup_kQ^U_k$ and $Q^L=\cup_kQ^L_k$. Then
\[
U(f,Q^U)-R(f,Q^U,\re) \ < \ \epsilon\quad\mbox{and}\quad R(f,Q^L,\re)-L(f,Q^L) \ < \ \epsilon.
\]
Since $\|Q^U\|,\|Q^L\|<\delta$, we have
\[
|R(f,Q^U,\re)-L| \ < \ \epsilon\quad\mbox{and}\quad |R(f,Q^L,\re)-L| \ < \ \epsilon.
\]
Thus
\[
U(f,Q^U)-L(f,Q^L) \ < \ 4\epsilon,
\]
which implies $U(f)-L(f)<4\epsilon$. Since $\epsilon$ was arbitrary, it follows that $f$ is Darboux integrable on $[a,b]$, and therefore Riemann integrable on $[a,b]$. \end{proof}


\begin{remark} The assumption of boundedness in Proposition \ref{prop} is necessary, but in a benign way.  For instance, one easily checks that the function 
\[
f(x)=\begin{cases} x^{-1/2} & \mbox{if $x>0$} \\ 0 & \mbox{if $x=0$} \end{cases}
\]
is right-endpoint Riemann integrable on $[0,1]$ even though it is unbounded. However, it is easy to show that if $f:[a,b]\to\mathbb R$ is right-endpoint Riemann integrable with limit $L$, then $f$ is Riemann integrable (in particular, bounded) on $[c,b]$ for any $a<c<b$, and  
\[
\lim_{c\to a^+}\int_c^b\!f(x)\,dx = L.
\]
\end{remark}


\begin{remark} Dantoni \cite{D} and the authors of \cite{KPR} independently proved stronger results using oscillation and measure. Let $f$ be a bounded function on $[a,b]$ and $\psi$ a function with domain $\{(x,y)\st a\leq x\leq y\leq b\}$ such that $x\leq\psi(x,y)\leq y$ for all $x,y$. Call $f$ \emph{$\psi$-integrable} if the Riemann sums $\sum f(\psi(x_{k-1},x_k))(x_k-x_{k-1})$ converge to a common limit.  In \cite{KPR} the authors establish sufficient conditions on $\psi$, and in \cite{D} Dantoni establishes necessary and sufficient conditions on $\psi$, for every bounded function $f$ on $[a,b]$ to be $\psi$-integrable; in particular both papers show that continuity of $\psi$ suffices, generalizing Gillespie's result.
\end{remark}

\medskip


\begin{thebibliography}{99}

\bibitem[C]{C}
A.-L.\ Cauchy, \emph{R\'{e}sum\'{e} des Le\c{c}ons sur le Calcul Infinitesimal} (1823), p.\ 81.

\bibitem[D]{D}
G.\ Dantoni, \emph{Sul confronto di alcune definizioni di integrale definito}, Fundamenta Mathematicae 19 (1932), 29--37.

\bibitem[G]{G}
D.\ C.\ Gillespie, \emph{The Cauchy definition of a definite integral}, Annals of Mathematics (2) 17 (1915), 61--63.

\bibitem[KPR]{KPR}
E.\ Kristensen, E.\ T.\ Poulsen, and E.\ Reich, \emph{A characterization of Riemann-integrability}, American Mathematical Monthly 69 No.\ 6 (1962), 498--505. 

\end{thebibliography}
\end{document}